\newtheorem{thm}{Theorem}
\newtheorem{cor}[thm]{Corollary}
\newtheorem{lemma}[thm]{Lemma}
\theoremstyle{definition}
\theoremstyle{remark}
\numberwithin{thm}{section}
\DeclareMathAlphabet{\mathsfsl}{OT1}{cmss}{m}{sl}
\renewcommand{\phi}{\varphi}
\newcommand{\eps}{\varepsilon}
\newcommand{\grad}{\nabla}
\newcommand{\argmin}{\operatorname*{arg\; min}}
\newcommand{\Expect}{\operatorname{\mathbb{E}}}
\newcommand{\bx}{\boldsymbol{x}}
\newcommand{\bX}{\boldsymbol{X}}
\newcommand{\bU}{\boldsymbol{U}}
\newcommand{\bV}{\boldsymbol{V}}
\def\reals{\mathbb{R}}
\def\bx{\boldsymbol{x}}
\def\b0{\mathbf{0}}
\def\bU{\boldsymbol{U}}
\def\bu{\boldsymbol{u}}
\def\be{\boldsymbol{e}}
\def\bv{\boldsymbol{v}}
\def\bA{\boldsymbol{A}}
\def\bT{\boldsymbol{T}}
\def\bB{\boldsymbol{B}}
\def\bS{\boldsymbol{S}}
\def\bw{\boldsymbol{w}}
\def\bI{\mathbf{I}}
\def\b1{\mathbf{1}}
\def\b0{\mathbf{0}}
\def\calA{\mathcal{A}}
\def\tr{\mathrm{tr}}
\newcommand{\di}{{\,\mathrm{d}}}
\begin{document}

\title{Mar\v{c}enko-Pastur law for Tyler's M-estimator}

  \author{Teng Zhang\\
  \multicolumn{1}{p{.7\textwidth}}{\centering\emph{Department of Mathematics,\\ University of Central Florida\\  Orlando, Florida 32816, USA}\\ teng.zhang@ucf.edu }
  \\
  \\
  Xiuyuan Cheng\\\multicolumn{1}{p{.7\textwidth}}{\centering\emph{Applied Mathematics Program, \\Yale University,
New Haven, CT 06511, USA }\\ xiuyuan.cheng@yale.edu }\\
  \\
    Amit Singer\\\multicolumn{1}{p{.7\textwidth}}{\centering\emph{Department of Mathematics and PACM, Princeton University\\
Princeton, New Jersey 08544, USA}\\ amits@math.princeton.edu}
  }



\maketitle
\begin{abstract}
This paper studies the limiting behavior of Tyler's M-estimator for the scatter matrix, in the regime that the number of samples $n$ and their dimension $p$ both go to infinity, and $p/n$ converges to a constant $y$ with $0<y<1$. We prove that when the data samples $\bx_1, \ldots, \bx_n$ are identically and independently generated from the Gaussian distribution $\mathcal{N}(\b0,\bI)$, the operator norm of the difference between a properly scaled Tyler's M-estimator and $\sum_{i=1}^n\bx_i\bx_i^\top/n$ tends to zero. As a result, the spectral distribution of Tyler's M-estimator converges weakly to the Mar\v{c}enko-Pastur distribution. 
\end{abstract}

\section{Introduction}
%

Many statistical estimators and signal processing algorithms require the estimation of the covariance matrix of the data samples. When the underlying distribution of the data samples $\bx_1, \ldots, \bx_n\in\reals^{p}$ is assumed to have zero mean, a commonly used estimator is the sample covariance matrix $\bS_n=\sum_{i=1}^n\bx_i \bx_i^\top/n$.

However, the estimator $\bS_n$ is sensitive to outliers, and performs poorly in terms of statistical efficiency (i.e., it has a large variance) for heavy-tailed distributions, e.g., when the tail decays slower than the Gaussian tail.

A popular robust covariance estimator is an M-estimator introduced by Tyler~\cite{Tyler1987}, denoted by $\hat{\Sigma}$, which is the unique solution to
\begin{equation}\label{eq:tyler0}
\hat{\Sigma}=\frac{p}{n}\sum_{i=1}^n \frac{\bx_i\bx_i^\top}{\bx_i^\top\hat{\Sigma}^{-1}\bx_i},\,\,\tr(\hat{\Sigma})=1.
\end{equation}
Tyler's M-estimator gives the ``shape'' of the covariance, but is missing its magnitude. However, for many applications the ``shape'' of the covariance suffices, for example, the principal components can be obtained from the ``shape''.

Compared with the sample covariance estimator, Tyler's  M-estimator is more robust to heavy-tailed elliptical distributions. The density function of elliptical distributions in $\mathbb{R}^p$ takes the form
\[
f(\bx;\Sigma,\mu)=|\Sigma|^{-1/2}g\{(\bx-\mu)^\top\Sigma^{-1}(\bx-\mu)\},
\]
where $g$ is some nonnegative function such that $\int_0^\infty x^{p-1} g(x)\di x$ is finite. This family of distributions is a natural generalization of the Gaussian distribution by allowing heavier or lighter tails while maintaining the elliptical geometry of the equidensity contours. Elliptical distributions are considered important in portfolio theory and financial data, and we refer to the work by El Karoui~\cite[Section 4]{elkaroui2009} for further discussion. Besides, elliptical distributions are used by Ollila and Tyler~\cite{Ollila2012} in modeling radar data, where the  empirical distributions are heavy-tailed because of outliers.

Tyler~\cite{Tyler1987} showed that when a data set follows an unknown elliptical distribution (with mean zero), Tyler's M-estimator is the most robust covariance estimator in  the  sense  of  minimizing  the  maximum  asymptotic  variance. This property suggests that Tyler's M-estimator should be more accurate than the sample covariance estimator for elliptically distributed data. Empirically, it has been shown to outperform the sample covariance estimator in applications such as finance in the work by Frahm and Jaekel~\cite{Gabriel2007}, anomaly detection in wireless sensor networks by Chen et al.~\cite{Chen2011}, antenna array processing by Ollila and Koivune~\cite{Ollila2003}, and radar detection by Ollila and Tyler~\cite{Ollila2012}.

\subsection{Asymptotic analysis in a high-dimensional setting}
Many scientific domains customarily deal with sets of high dimensional data samples, and therefore it is increasingly common to work with data sets where the number of variables, $p$, is of the same order of magnitude as the number of observations, $n$. Under this high-dimensional setting, the asymptotic spectral properties of $\bS_n$ at the limit of infinite number of samples and infinite dimensions have been well studied by Johnstone~\cite{JohnstoneICM}. A noticeable example is the convergence of the spectral distribution. Denoting the eigenvalues of a matrix $\bA$ by $\lambda_1(\bA), \ldots, \lambda_n(\bA)$, its spectral
distribution is a discrete probability measure
\[
P=P(\cdot |\bA)=\frac{1}{n}\sum_{i=1}^n\delta_{\lambda_i(\bA)}
\]
with $\delta_{s}$ denoting Dirac measure at $s\in\reals$.   Mar\v{c}enko and Pastur~\cite{MP1967} showed that when the entries of $\{\bx_i\}_{i=1}^n$ are Gaussian independent identically distributed random variables with mean $0$ and variance $1$, $p,n\rightarrow \infty$ and $p/n\rightarrow y$, where $0< y\leq 1$, the spectral distribution of the eigenvalues of $\bS_n$ converges weakly to the Mar\v{c}enko-Pastur distribution defined by
\begin{equation}\label{eq:esd}
\rho_{\text{MP},y}(x) =\frac{1}{2\pi}\frac{y\sqrt{(y_+-x)(x-y_-)}}{x} \mathbf{1}_{[y_-,y_+]}, \,\,\,\text{where $y_{\pm}=(1\pm \sqrt{y})^2$.}
\end{equation}

Tyler's M-estimator is closely related to and can be considered as a special case of Maronna's M-estimator, which is defined by
\begin{equation}\label{eq:maronna1}
\bar{\Sigma}=\frac{1}{n}\sum_{i=1}^n u(\bx_i^\top\bar{\Sigma}^{-1}\bx_i)\bx_i\bx_i^\top
\end{equation}
for a nonnegative function $u:[0,\infty)\rightarrow [0,\infty)$. The properties of Maronna's M-estimator in the high-dimensional regime when $p,n\rightarrow\infty$, $p/n\rightarrow y$ and $0<y<1$ have been analyzed in recent works by  Couillet et al.~\cite{Couillet2013,Couillet2013_2}, which obtained convergence results for a properly scaled Maronna's M-estimator under the assumptions that $u(x)$ is nonnegative, nonincreasing and continuous; $xu(x)$ is nondecreasing and bounded and $\sup_{x}xu(x)>1$. Moreover, spiked random matrix models were also studied by Couillet~\cite{Couillet2015139}. However, these results do not apply to Tyler's M-estimator, although Frahm and Jaekel~\cite{Gabriel2007} have conjectured that the spectral distribution converges weakly to the Mar\v{c}enko-Pastur distribution. Some works focused on the performance of Tyler's M-estimator for the case $p,n\rightarrow\infty$ and $p/n\rightarrow 0$:  D\"{u}mbgen~\cite{Dumbgen1998} showed that the condition number of Tyler's estimator is $1+4\sqrt{p/n}+o(\sqrt{p/n})$, and Frahm and Glombek~\cite{Frahm2012} showed that the spectral distribution of $\sqrt{n/p}(\bar{\Sigma}-\bI)$ converges weakly to a semicircle distribution. 

\subsection{Main results}

In this paper, we analyze Tyler's M-estimator in the high-dimensional setting. Our main results,  Theorem~\ref{thm:norm} and Corollary~\ref{cor:spherical}, show that as $p, n\rightarrow\infty$ and $p/n\rightarrow y$, $0<y<1$, the spectral distribution of a properly scaled Tyler's M-estimator converges weakly to the Mar\v{c}enko-Pastur distribution $\rho_{\text{MP},y}(x)$. Based on the properties of Tyler's M-estimator, this paper analyzes the spectral distribution when data samples are i.i.d. drawn from other distributions, such as elliptical distributions.

When data samples are generated from elliptical distributions, the spectral distribution of the sample covariance estimator has been studied by El Karoui~\cite[Theorem 2]{elkaroui2009}. Compared to Corollary~\ref{cor:spherical}, the limiting spectral distribution of $\bS_n$ is much more complicated, and therefore our result might be more applicable in practice. 

High-dimensional analysis of Maronna's M-estimator of the covariance are generally obtained by showing that the operator norm of the difference between M-estimator and a standard Wishart matrix (or sample covariance matrix) tends to $0$: D\"{u}mbgen~\cite{Dumbgen1998} proved it by a linear expansion of the M-estimator, and Couillet et al.~\cite{Couillet2013,Couillet2013_2} proved it by representing Maronna's M-estimator as a weighted sum of $\bx_i\bx_i^\top$ and prove the uniform convergence of the weights. We follow the same  direction  while giving an alternate proof for the convergence of the weights, by considering the weights as the solution to an optimization problem, which can handle Tyler's M-estimator that is not covered by the results in Couillet et al.~\cite{Couillet2013,Couillet2013_2}. We remark that this approach can also be applied to Maronna's M-estimator to prove some of the results in Couillet et al.~\cite{Couillet2013,Couillet2013_2}.




The rest of the paper is organized as follows. In Section~\ref{sec:tyler} we introduce the representation of Tyler's M-estimator as a linear combination of $\bx_i\bx_i^\top$ and present the main result that when the data set is i.i.d. sampled from the  Gaussian distribution $\mathcal{N}(\b0,\bI)$, a properly scaled Tyler's is asymptotically equivalent to $\bS_n$ in the sense that $\|p\hat{\Sigma}-\bS_n\|\rightarrow 0$. As a result, the spectral distribution of Tyler's M-estimator converges weakly to the Mar\v{c}enko-Pastur distribution. We also extend the result to elliptical distributions. 
The technical proofs are given in Section~\ref{sec:proof}. While some Lemmas and technical proofs are also used by Couillet et al.~\cite{Couillet2013,Couillet2013_2} (for example, Lemma~\ref{lemma:deri} and the analysis in the proof of Theorem~\ref{thm:norm} are similar to Couillet et al. \cite[Lemma 2, Theorem 1]{Couillet2013} and Couillet et al. \cite[Lemma 6]{Couillet2013_2}, we still include them for the completeness of the paper.

As for notations, we use $c, c', C, C'$ to denote any fixed constants as $p,n\rightarrow\infty$ (though they may depend on $y$). Depending on the context, they might denote different values in different equations.

\section{Tyler's M-estimator in the High-dimensional Regime}\label{sec:tyler}
We introduce the representation of Tyler's M-estimator as a linear combination of $\bx_i\bx_i^\top$ in Section~\ref{sec:tyler_background}, and present the main result in Section~\ref{sec:tyler_main} that when the data set is i.i.d. sampled from the  Gaussian distribution $\mathcal{N}(\b0,\bI)$, $\|p\hat{\Sigma}-\bS_n\|$ converges to $0$ almost surely. Based on this observation, we prove that the spectral distribution of Tyler's M-estimator converges weakly the Mar\v{c}enko-Pastur distribution in Section~\ref{sec:tyler_general}. The generalization of the results to more general settings is also discussed in Section~\ref{sec:tyler_general}.

\subsection{Properties of Tyler's M-estimator}\label{sec:tyler_background}
The analysis for Tyler's M-estimator in this paper is based on the following representation, whose proof is deferred to  Section~\ref{sec:proof}. We remark that equation \eqref{eq:problem1} in Lemma~\ref{thm:alternate_tyler} has appeared in the work by Wiesel \cite[(27)]{Wiesel2012LSE} and Hardt and Moitra \cite[Section A]{Hardt2012} as ``covariance estimation in scaled Gaussian distributions'' and ``Barthe's convex program'', but its connection to Tyler's M-estimator has not been rigorously justified yet.
\begin{lemma}\label{thm:alternate_tyler}
Tyler's M-estimator can be written as
\begin{equation}\label{eq:equivalence_problem1}
\hat{\Sigma}=\sum_{i=1}^n\hat{w}_i\bx_i\bx_i^\top\Big/\tr\Big(\sum_{i=1}^n\hat{w}_i\bx_i\bx_i^\top\Big),
\end{equation}
where $\{\hat{w}_i\}_{i=1}^n$ are uniquely defined by
\begin{equation}\label{eq:problem1}
(\hat{w}_1, \ldots, \hat{w}_n)=\argmin_{w_i>0, \sum_{i=1}^n w_i=1} - \sum_{i=1}^n\ln w_i + \frac{n}{p} \ln\det\big(\sum_{i=1}^nw_i\bx_i\bx_i^\top\big).
\end{equation}
\end{lemma}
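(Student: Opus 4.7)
The strategy is to read off the first-order optimality conditions of \eqref{eq:problem1} and identify them with Tyler's fixed-point equation \eqref{eq:tyler0}, after checking that the Lagrange multiplier vanishes.

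For existence and uniqueness of the minimizer, I would first observe that the objective is scale-invariant in $w$: for any $c>0$, the term $-n\log c$ contributed to $-\sum_i\log(cw_i)$ exactly cancels the $n\log c$ contributed to $(n/p)\log\det(c\sum_i w_i\bx_i\bx_i^T)$. Passing to log-coordinates $u_i=\log w_i$ and applying the Cauchy--Binet identity $\det(\sum_i w_i\bx_i\bx_i^T)=\sum_{|I|=p}\det(X_I)^2\prod_{i\in I}w_i$ with $X=[\bx_1,\ldots,\bx_n]$, the objective rewrites as $-\sum_i u_i+(n/p)\log\sum_{|I|=p}\det(X_I)^2\exp(\sum_{i\in I}u_i)$. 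This is convex in $u$ as a log-sum-exp of affine functions, constant along the direction $\mathbf{1}$ (which is exactly the scale invariance), and strictly convex and coercive on $\mathbf{1}^\perp$. Imposing $\sum_i e^{u_i}=1$ then picks out a unique representative of the minimizing line, yielding a unique minimizer $\hat w$ on the open simplex.

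For the first-order conditions, introduce a Lagrange multiplier $\lambda$ for $\sum_i w_i=1$ and write $M\defeq\sum_j w_j\bx_j\bx_j^T$; stationarity reads $-1/w_i+(n/p)\bx_i^T M^{-1}\bx_i=\lambda$. Multiplying the $i$th equation by $w_i$, summing, and using $\sum_i w_i\bx_i^T M^{-1}\bx_i=\tr(M^{-1}M)=p$ together with $\sum_i w_i=1$ forces $\lambda=0$. Consequently $\hat w_i=(p/n)/(\bx_i^T M^{-1}\bx_i)$, and back-substitution yields $M=(p/n)\sum_i\bx_i\bx_i^T/(\bx_i^T M^{-1}\bx_i)$, which is exactly Tyler's equation for $M$ without the trace normalization.

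To close the loop, set $\hat\Sigma\defeq M/\tr(M)$; then $\bx_i^T\hat\Sigma^{-1}\bx_i=\tr(M)\,\bx_i^T M^{-1}\bx_i$, and a one-line substitution verifies both $\hat\Sigma=(p/n)\sum_i\bx_i\bx_i^T/(\bx_i^T\hat\Sigma^{-1}\bx_i)$ and $\tr(\hat\Sigma)=1$, so $\hat\Sigma$ coincides with Tyler's M-estimator by the uniqueness recalled at the start of this section. The main delicate step is the convexity and coercivity analysis of the Cauchy--Binet reformulation on $\mathbf{1}^\perp$; everything else is an algebraic pairing between stationarity and Tyler's equation.
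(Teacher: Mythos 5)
Your proof is correct, and it takes a genuinely different route from the paper's in two places worth noting. First, for convexity and uniqueness the paper passes to exponential coordinates $z_i=\log w_i$ and cites the convexity result from Wiesel (Lemma 4 there) rather than deriving it; your Cauchy--Binet expansion $\det\big(\sum_i w_i\bx_i\bx_i^T\big)=\sum_{|I|=p}\det(X_I)^2\prod_{i\in I}w_i$ makes the log-sum-exp structure explicit and gives a self-contained convexity argument. You correctly flag coercivity on $\mathbf{1}^{\perp}$ as the delicate step; strictly speaking it needs the Kent--Tyler span condition (here $\mathrm{span}\{\bx_i\}=\reals^p$ and nondegeneracy on proper subspaces), which the paper also handles only by citation, so the two treatments are at comparable rigor on this point. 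Second, and more significantly, your determination of the Lagrange multiplier is much cleaner than the paper's. You multiply the stationarity equation $-1/\hat w_i+(n/p)\bx_i^T M^{-1}\bx_i=\lambda$ by $\hat w_i$, sum, and use $\sum_i\hat w_i\bx_i^T M^{-1}\bx_i=\tr(M^{-1}M)=p$ together with $\sum_i\hat w_i=1$ to get $\lambda=0$ in one line. The paper instead derives a fixed-point relation $\sum_i\bx_i\bx_i^T/(\bx_i^T\hat\Sigma^{-1}\bx_i)=c\,\hat\Sigma$ with an undetermined constant $c$, and pins down $c=n/p$ indirectly: it introduces the auxiliary geodesically convex function $F(\Sigma)=\sum_i\log(\bx_i^T\Sigma^{-1}\bx_i)+c\log\det\Sigma$, observes $\hat\Sigma$ is its global minimizer, and shows by evaluating $F(a\bI)$ that the minimizer cannot exist unless $c=n/p$. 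Your trace computation gives the same constant directly and avoids any appeal to geodesic convexity for this step. The final trace-normalization verification is the same in both. No gap; a nicer argument for the multiplier.
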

\subsection{Isotropic Gaussian Distribution}\label{sec:tyler_main}
In this subsection, we assume that $\{\bx_i\}_{i=1}^n\subset \reals^p$ are i.i.d. drawn from $\mathcal{N}(\b0,\bI)$. The main result, Theorem~\ref{thm:norm}, characterizes the convergence and convergence rate of Tyler's M-estimator to $\bS_n$ in terms of the operator norm. Its proof applies Lemma~\ref{thm:weights}, whose proof is rather technical and deferred to Section~\ref{sec:proof}.

Tyler's M-estimator does not exist when $p>n$ (see the argument by Zhang~\cite[Theorem III.1]{Zhang2012}) and it is not unique when $p=n$ (one may check that when $\bx_i=\be_i$ for all $1\leq i\leq p$, all diagonal matrices with trace $1$ satisfy \eqref{eq:tyler0}). As a result, throughout the paper we assume $y<1$.

\begin{lemma}\label{thm:weights}
If $\{\bx_i\}_{i=1}^n$  are i.i.d. sampled from $\mathcal{N}(\b0,\bI)$, then $\max_{1\leq i\leq n}|n\,\hat{w}_i - 1|$ converges to $0$ almost surely as $p, n\rightarrow\infty$: There exist $C,c, c'>0$ such that for any $\eps<c'$,
\begin{equation}\label{eq:weight_prob}
\Pr\left(\max_{1\leq i\leq n}|n\,\hat{w}_i - 1|\leq \eps\right)\geq 1-C ne^{-c\eps^2n}.\end{equation}
\end{lemma}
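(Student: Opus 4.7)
The plan is to exploit the first-order (KKT) conditions of the convex program \eqref{eq:problem1} together with Gaussian concentration of quadratic forms. Differentiating the objective and handling the simplex constraint $\sum_j w_j = 1$ yields a stationarity relation of the form $-1/\hat{w}_j + (n/p)\,\bx_j^T M^{-1}\bx_j + \mu = 0$, where $M = \sum_i \hat{w}_i \bx_i\bx_i^T$ and $\mu$ is the Lagrange multiplier. Multiplying by $\hat{w}_j$, summing over $j$, and using the trace identity $\sum_j \hat{w}_j \bx_j^T M^{-1}\bx_j = \tr(M^{-1} M) = p$, one finds $\mu = 0$. Rescaling by $\tilde{w}_j := n\hat{w}_j$ and setting $A(\tilde{w}) := \tfrac{1}{n}\sum_i \tilde{w}_i \bx_i\bx_i^T$, the KKT condition reduces to the self-consistent equation $\tilde{w}_j \cdot \tfrac{1}{p}\,\bx_j^T A(\tilde{w})^{-1}\bx_j = 1$. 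Applying Sherman--Morrison to extract $\bx_j$ from $A(\tilde{w})$, introducing the leave-one-out matrix $B_j(\tilde{w}) := \tfrac{1}{n}\sum_{i\neq j}\tilde{w}_i \bx_i\bx_i^T$ and the scalar $r_j := \bx_j^T B_j(\tilde{w})^{-1}\bx_j$, this collapses further to the explicit relation $\tilde{w}_j = p/[(1-p/n)\,r_j]$. The structural gain is that $\bx_j$ is independent of $B_j(\tilde{w})$ conditional on $\{\bx_i\}_{i\neq j}$ and on the remaining weights.

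Given this conditional independence, $\bx_j$ is standard Gaussian and independent of $B_j$, so the Hanson--Wright inequality yields
\[
\Pr\bigl(|r_j - \tr B_j(\tilde{w})^{-1}| > t \,\big|\, \{\bx_i\}_{i\neq j}\bigr) \leq 2\exp\!\Bigl(-c\,\min\bigl(t^2/\fnormsq{B_j^{-1}},\, t/\norm{B_j^{-1}}\bigr)\Bigr).
\]
On the event that the spectrum of $B_j(\tilde{w})$ is bounded away from $0$ and $\infty$ --- which holds whenever the $\tilde{w}_i$ are uniformly bounded above and below --- one has $\norm{B_j^{-1}} = O(1)$ and $\fnormsq{B_j^{-1}} = O(p)$, which gives $|r_j - \tr B_j^{-1}| \leq \eps\,\tr B_j^{-1}$ with probability at least $1 - 2e^{-c\eps^2 n}$. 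Combining with the Mar\v{c}enko--Pastur Stieltjes-transform estimate $\tfrac{1}{p}\tr B_j(\tilde{w})^{-1} \approx 1/(1-p/n)$, valid when $\tilde{w}$ lies near the all-ones vector, yields $r_j \approx p/(1-p/n)$ and hence $\tilde{w}_j \approx 1$.

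To convert these pointwise heuristics into \eqref{eq:weight_prob}, one must remove the circularity that $B_j(\tilde{w})$ depends on the unknown weights. I would first establish a crude two-sided a priori bound $c_1 \leq \tilde{w}_j \leq c_2$ for all $j$ simultaneously with high probability, by arguing that any extremal $\tilde{w}_{j^*}$ would force $r_{j^*}$ outside the range compatible with the standard spectral bound on $B_{j^*}$ together with $\chi^2$ concentration of $\|\bx_{j^*}\|^2$. Within this restricted regime the spectrum of $B_j(\tilde{w})^{-1}$ is uniformly controlled, so the conditional Hanson--Wright estimate above applies uniformly over admissible weight configurations. A union bound over $j = 1,\dots,n$ then delivers \eqref{eq:weight_prob} with the claimed rate $Cn e^{-c\eps^2 n}$, and the almost-sure convergence $\max_j |n\hat{w}_j - 1| \to 0$ follows from Borel--Cantelli applied with a sequence $\eps_n \searrow 0$ slower than $n^{-1/2}$.

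The main obstacle is precisely this bootstrap: the leave-one-out quantities $B_j(\tilde{w})$ depend on the same weights one is trying to control, so Steps~2 must be chained carefully with a uniform a priori bound. Equivalently, one can view the KKT system as a fixed-point equation for $\tilde{w}$ and show, via the concentration above, that the uniform vector is the only solution within the relevant neighborhood; the contractive behavior of $\tilde{w}_j \mapsto p/[(1-p/n)\,r_j(\tilde{w})]$ near the uniform vector is what ultimately makes this work. A secondary subtlety is arranging the Mar\v{c}enko--Pastur approximation for $\tfrac{1}{p}\tr B_j(\tilde{w})^{-1}$ to hold uniformly in the weights across the admissible regime rather than merely pointwise.
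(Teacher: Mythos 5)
Your derivation of the stationarity relation is correct and identifies exactly the same equation the paper exploits: from the KKT conditions of \eqref{eq:problem1} one gets $\hat{w}_j\,\bx_j^T M^{-1}\bx_j = p/n$ for $M = \sum_i\hat w_i\bx_i\bx_i^T$, and this is precisely what the paper's auxiliary function $g(\bw)$ encodes. The route you take from there, however, is genuinely different. You pass through Sherman--Morrison to leave-one-out quantities $r_j = \bx_j^T B_j(\tilde w)^{-1}\bx_j$ and plan to apply Hanson--Wright conditionally, in the style of \citep{Couillet2013,Couillet2013_2}. The paper instead works globally: it shows $g(\b0)$ is small (via the concentration in Lemma~\ref{lemma:deri}, i.e.\ concentration of $\frac1p\bx_i^T\bS^{-1}\bx_i$ around $1$ with the \emph{full} sample covariance $\bS$, obtained by rewriting $\bx_i^T\bS^{-1}\bx_i/n$ as $\|\bu_i\|^2$ for rows of a random orthogonal matrix), bounds $\|(\grad g(\b0))^{-1}\|_\infty$ (Lemma~\ref{lemma:hessian}), controls the Lipschitz constant of $\grad g$, and invokes a quantitative Schauder fixed-point lemma (Lemma~\ref{lemma:pertubation}) to manufacture a root of $g$ within $O(\|g(\b0)\|_\infty)$ of $\b0$. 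Uniqueness of the minimizer of \eqref{eq:problem1} then identifies this constructed root with $n\hat{\bw}-\b1$. The key structural advantage is that the paper starts \emph{from} the candidate point $\b0$ and needs no information about where the actual solution lives a priori; the whole bootstrap problem you flag simply does not arise.

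This is also where your proposal has a genuine gap, not merely a subtlety. Your leave-one-out step requires (i) an a priori two-sided bound $c_1\le\tilde w_j\le c_2$ uniformly in $j$, and (ii) uniformity of the Hanson--Wright and Stieltjes-transform estimates over the admissible set of weight vectors, because $B_j(\tilde w)$ is \emph{not} independent of $\bx_j$ (the weights $\tilde w_i$, $i\neq j$, are implicitly coupled to $\bx_j$ through the fixed-point system). For (i), the ``extremal index'' argument you sketch does not close: taking $j^*=\argmax_j\tilde w_j$ and using $B_{j^*}\preceq\tilde w_{j^*}\frac1n\sum_{i\neq j^*}\bx_i\bx_i^T$ together with $\tilde w_{j^*}=p/[(1-p/n)r_{j^*}]$ only yields $1\le C_4/[(1-y)c]$, which is trivially true rather than a contradiction (and the symmetric argument with the minimizer fails the same way). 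A working a priori bound would need a finer argument (e.g.\ chaining extremal indices, or exploiting $\sum_j\tilde w_j = n$ more cleverly), and (ii) would then need an $\eps$-net over weight configurations with Lipschitz control of $\tilde w\mapsto r_j(\tilde w)$. None of this is impossible, but it is the heart of the proof and is left as a plan. If you want to keep the leave-one-out framework, you should either supply the bootstrap or, better, imitate the paper's trick: work with the gradient map $g$ around the known point $\b0$ and use a local fixed-point/inverse-function argument, which sidesteps both the circularity and the need for an a priori bound.
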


\begin{thm}\label{thm:norm}
Suppose that $\{\bx_i\}_{i=1}^n$ are i.i.d. sampled from $\mathcal{N}(\b0,\bI)$,  $p, n\rightarrow\infty$ and $p/n=y$, where $0<y<1$, then the operator norm of the difference between $\bS_n$ and a scaled Tyler's M-estimator converges to $0$ almost surely, and
there  exist $C,c,c'>0$ such that for any $\eps<c'$,
\begin{equation}\label{eq:norm_prob}
\Pr\left(\left\| p\, \hat{\Sigma} -\frac{1}{n} \sum_{i=1}^n \bx_i\bx_i^\top \right\|\leq \eps\right)\geq 1-C ne^{-c\eps^2n}.\end{equation}
\end{thm}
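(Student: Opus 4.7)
The plan is to combine Lemma~\ref{thm:alternate_tyler} with Lemma~\ref{thm:weights}: writing $\hat w_i = (1+\delta_i)/n$ with $\delta := \max_i |\delta_i|$ guaranteed small by Lemma~\ref{thm:weights}, the numerator $\sum_i \hat w_i \bx_i\bx_i^T$ in the representation \eqref{eq:equivalence_problem1} is already a small additive perturbation of $\bS_n$. The normalizing trace $T := \tr(\sum_i \hat w_i \bx_i\bx_i^T) = \sum_i \hat w_i \|\bx_i\|^2$ should be close to $p$, since the constraint $\sum_i \hat w_i = 1$ from \eqref{eq:problem1} turns $T-p$ into a weighted average of the centered chi-squared variables $\|\bx_i\|^2 - p$.

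Concretely, I would start from the algebraic identity
\[
p\hat\Sigma - \bS_n = \frac{p}{T}\Big(\sum_{i=1}^n \hat w_i \bx_i\bx_i^T - \bS_n\Big) + \Big(\frac{p}{T}-1\Big)\bS_n,
\]
and bound the two summands separately. The first bracket equals $\frac{1}{n}\sum_i \delta_i \bx_i\bx_i^T$, so its operator norm is at most $\delta\,\|\bS_n\|$. For the second, using $\sum_i \hat w_i = 1$, one has $T-p = \sum_i \hat w_i(\|\bx_i\|^2 - p)$; Cauchy--Schwarz, together with concentration of $\frac{1}{n}\sum_i (\|\bx_i\|^2 - p)^2$ around $2p$ via standard tail bounds for sums of i.i.d.\ chi-squared variables, yields $|T-p| = O(\sqrt{p})$, and hence $|p/T - 1| = O(1/\sqrt{p})$, except on an event of exponentially small probability.

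To close the argument, I would invoke a standard Wishart operator-norm bound (of Davidson--Szarek type) to obtain $\|\bS_n\| \le (1+\sqrt{y})^2 + o(1)$ with probability $\ge 1 - e^{-cn}$. Assembling the pieces gives
\[
\|p\hat\Sigma - \bS_n\| \;\le\; C\delta + C p^{-1/2},
\]
and invoking Lemma~\ref{thm:weights} with threshold $\eps/(2C)$, while absorbing the $p^{-1/2}$ slack into $\eps$ for $p$ large, converts the guarantee on $\delta$ into the announced probability bound for $\|p\hat\Sigma - \bS_n\|$. The almost-sure convergence then follows from Borel--Cantelli applied to the summable tail in \eqref{eq:norm_prob}.

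The main obstacle is bookkeeping on the concentration side: all auxiliary events (Wishart operator norm, uniform chi-squared concentration, control of $T$) must be shown to fail with probability at most $e^{-cn}$, so that they are dominated by the weight bound $1 - Cn e^{-c\eps^2 n}$ from Lemma~\ref{thm:weights} and the overall probability estimate keeps its stated shape. The real analytic substance of the theorem sits entirely in Lemma~\ref{thm:weights}; once the weights are controlled, Theorem~\ref{thm:norm} reduces to the careful linearization sketched above together with verifying that the trace normalization $T$ does not distort the $\eps$-scaling.
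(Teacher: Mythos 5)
Your proof takes essentially the same route as the paper's: represent $p\hat\Sigma$ via Lemma~\ref{thm:alternate_tyler}, control the weights uniformly via Lemma~\ref{thm:weights}, bound $\|\bS_n\|$ by the Davidson--Szarek estimate, and show the normalizing trace $T=\sum_i\hat w_i\|\bx_i\|^2$ is close to $p$. The only minor deviation is in controlling $T$: the paper observes $\min_i\|\bx_i\|^2\le T\le\max_i\|\bx_i\|^2$ (using $\sum_i\hat w_i=1$) and applies Gaussian sphere concentration directly, which is simpler and sufficient, whereas your Cauchy--Schwarz route needs concentration of $\tfrac{1}{n}\sum_i(\|\bx_i\|^2-p)^2$ --- a square of a sub-exponential, hence not handled by off-the-shelf sub-exponential tail bounds without a truncation step --- and the extra $O(p^{-1/2})$ sharpness it yields is moot since \eqref{eq:norm_prob} is already vacuous for $\eps\lesssim p^{-1/2}$.
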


Theorem~\ref{thm:norm} implies that all first order properties of the sample covariance matrix extend to Tyler's estimator. The strategy of the proof for Theorem~\ref{thm:norm} is as follows. According to Lemma~\ref{thm:alternate_tyler}, a scaled Tyler's M-estimator is a linear combination of $\bx_i\bx_i^\top$, i.e., it can be written as $\sum_{i=1}^n \hat{w}_i\bx_i\bx_i^\top$ (up to a scaling). Then Lemma~\ref{thm:weights} shows that $n \hat{w}_i$ converges to $1$ uniformly. Based on the following matrix analysis, Theorem~\ref{thm:norm} is concluded.

\begin{proof}[Proof of Theorem~\ref{thm:norm}]
We first prove that for $\eps<c'$,
\begin{equation}\label{eq:operator norm1}
\Pr\left(\left\| \sum_{i=1}^n\hat{w}_i \bx_i\bx_i^\top -\frac{1}{n} \sum_{i=1}^n \bx_i\bx_i^\top \right\|\leq \eps\right)\geq 1-C ne^{-c\eps^2n}.
\end{equation}
Let $\bB_n=\sum_{i=1}^n(\hat{w}_i-1/n)\bx_i\bx_i^\top= \sum_{i=1}^n\hat{w}_i \bx_i\bx_i^\top - \sum_{i=1}^n \bx_i\bx_i^\top/n$, then \begin{align*}\|\bB_n\|=&\sup_{\|\bv\|=1}\bv^\top\bB_n\bv=
\sup_{\|\bv\|=1}\sum_{i=1}^n(\hat{w}_i-\frac{1}{n})(\bv^\top\bx_i)^2
\\\leq& \sup_{\|\bv\|=1}\sum_{i=1}^n\left\|\hat{\bw}-\frac{1}{n}\mathbf{1}\right\|_\infty(\bv^\top\bx_i)^2
\leq \|n\hat{\bw}-\mathbf{1}\|_\infty \|\frac{1}{n}\sum_{i=1}^n\bx_i\bx_i^\top \|.\end{align*}   Since $\|n\hat{\bw}-\mathbf{1}\|_\infty\rightarrow 0$ with probability estimated in \eqref{eq:weight_prob}, and  Davidson and Szarek~\cite[Theorem II.13]{Davidson2001} showed that $\|\sum_{i=1}^n\bx_i\bx_i^\top/n \|$ is bounded above by $(1+2\sqrt{y})^2$ with probability $1-C\exp(-cn)$, \eqref{eq:operator norm1} is proved.

Second, since \[\left\|\sum_{i=1}^n\hat{w}_i \bx_i\bx_i^\top\right\|\leq \left\| \sum_{i=1}^n\hat{w}_i \bx_i\bx_i^\top - \sum_{i=1}^n \bx_i\bx_i^\top/n \right\|+\left\|\sum_{i=1}^n \bx_i\bx_i^\top/n \right\|,
\]
\begin{equation}\text{
$\Pr\left(\| \sum_{i=1}^n\hat{w}_i \bx_i\bx_i^\top\|<C'\right)>1-Cn\exp(-cn)$.}\label{eq:norm_tyler}\end{equation} Besides,  $\tr(\sum_{i=1}^n\hat{w}_i\bx_i\bx_i^\top)=\sum_{i=1}^n \hat{w}_i\bx_i^\top\bx_i\rightarrow p$ in the same rate as in \eqref{eq:norm_tyler}: applying the concentration of high-dimensional Gaussian measure on the sphere by  Barvinok~\cite[Corollary 2.3]{Barvinok710}, we have
\begin{align}\label{eq:trace_convergence}
&\max\left[\Pr\left\{\sum_{i=1}^n \hat{w}_i\bx_i^\top\bx_i <p(1-\eps)\right\},\Pr\left\{\sum_{i=1}^n \hat{w}_i\bx_i^\top\bx_i > p/(1-\eps)\right\}\right]
\\\nonumber\leq &\max\left[\Pr\left\{\min_{1\leq i\leq n} \|\bx_i\|^2<p(1-\eps)\right\}, \Pr\left\{\max_{1\leq i\leq n} \|\bx_i\|^2>p/(1-\eps)\right\}\right]
<ne^{-\eps^2p/4}.
\end{align}
Combining \eqref{eq:norm_tyler}, \eqref{eq:trace_convergence} and \eqref{eq:equivalence_problem1},
\begin{equation}\label{eq:operator norm2}
\left\| \sum_{i=1}^n\hat{w}_i \bx_i\bx_i^\top -p\, \hat{\Sigma} \right\|= \left\| \sum_{i=1}^n\hat{w}_i \bx_i\bx_i^\top\right\| \left\{ 1- p/\tr(\sum_{i=1}^n\hat{w}_i \bx_i\bx_i^\top)\right\}
\end{equation}
converges in the same rate as specified in \eqref{eq:operator norm1}. \eqref{eq:norm_prob} is then proved by combining \eqref{eq:operator norm1}, \eqref{eq:operator norm2} and the triangle inequality.
\end{proof}

From the probabilistic estimation \eqref{eq:norm_prob} we obtain a convergence rate of $O(\sqrt{\ln n/n})$. The logarithmic factor is due to a ``max'' bound of $\{\hat{w}_i\}_{i=1}^n$ in Lemma~\ref{thm:weights}, while in fact, an ``average'' bound is expected. As a result, we conjecture that this $\sqrt{\ln n}$ factor could be possibly removed by a more rigorous argument.


\subsection{More general distributions and spectral distribution}\label{sec:tyler_general}
We remark that Theorem~\ref{thm:norm} can be extended from the setting of the normal distribution $\mathcal{N}(\b0,\bI)$ to any elliptical distribution $\mu_p$, which is characterized by the probability density function $\mu_p(\bx)=C(g_p)\det(\bT_p)^{-1/2}g_p(\bx^\top\bT_p^{-1}\bx)$, where $\bT_p$ is a positive definite matrix in $\reals^{p\times p}$, $g_p: [0,\infty)\rightarrow [0,\infty)$ satisfies $\int_{0}^\infty g_p(x)x^{p-1}<\infty$, and $C(g_p)$ is a normalization parameter that only depends on $g_p$. Then $\|\tr(\bT_p)\hat{\Sigma}-\sum_{i=1}^n \bx_i\bx_i^\top/n\|\rightarrow 0$ almost surely as $p,n\rightarrow\infty$. The analysis is based on Theorem~\ref{thm:norm}, the affine equivariance property of Tyler's M-estimator, and the fact that Tyler's M-estimator is unchanged if  $\{\bx_i\}_{i=1}^n$ are replaced by $\{c_i\bx_i\}_{i=1}^n$.

Another direction of generalization of Theorem~\ref{thm:norm} is the model by Couillet et al.~\cite{Couillet2013}: The elements of $\{\bx_i\}_{i=1}^n$ are i.i.d. sampled from an  either  real  or  circularly  symmetric  complex distribution with $\Expect(x_{ij})=0$, $\Expect(x_{ij}^2)=1$, and $\Expect(|x_{ij}|^{8+\eta})<\alpha$ for some $\eta, \alpha>0$. Then, following the proof in this paper (while replacing Lemma~\ref{lemma:deri} by~\cite[Lemma 2]{Couillet2013}), one can show that  $\left\| p\, \hat{\Sigma} - \sum_{i=1}^n \bx_i\bx_i^\top/n \right\|\rightarrow 0$ almost surely as $p,n\rightarrow\infty$.

We have the following results on the weak convergence of the spectral distribution of Tyler's M-estimator, where the first part proves the conjecture by Frahm and  Jaekel~\cite{Gabriel2007}. \begin{cor}\label{cor:spherical}
\begin{itemize}\item If $\{\bx_i\}_{i=1}^n$ are i.i.d. sampled from $\mathcal{N}(\b0,\bI)$, then the spectral measure $P(\cdot|p\hat{\Sigma})$ converges weakly to the Mar\v{c}enko-Pastur distribution.

\item If $\{\bx_i\}_{i=1}^n$ are i.i.d. sampled from an elliptical distribution $ C(g_p)g_p(\bx^\top\bT_p^{-1}\bx)$ such that the spectral measure of $\bT_p$ converges weakly to a distribution $H$ on $\reals$. Then the spectral measure   $P(\cdot|\tr(\bT_p)\hat{\Sigma})$ converges weakly to a probabilistic measure $\rho$ whose Stieltjes transform  $s(z)=\int 1/(x-z)\rho(\di x)$ ($z\in\mathbb{C}\setminus\mathbb{R}$) is given implicitly by \[s(z)=\int\frac{1}{t\{1-y-y z\,s(z)\}-z}\di H(t).\]
\end{itemize}
\end{cor}

This corollary can be proved by combining $\|\tr(\bT_p)\hat{\Sigma}-\sum_{i=1}^n \bx_i\bx_i^\top/n\|\rightarrow 0$, the analysis on the perturbation of eigenvalues by Bhatia~\cite[Corollary III.4.2]{Bhatia1997}, the spectral measure of $\sum_{i=1}^n \bx_i\bx_i^\top/n$ by Mar\v{c}enko and Pastur~\cite{MP1967}, Bai and Silverstein~\cite[(6.1.2)]{MP1967,bai2009spectral} and Slutsky's Lemma.

\section{Proof of Lemmas}\label{sec:proof}

\subsection{Proof of Lemma~\ref{thm:alternate_tyler}}\label{sec:thm_alternate_tyler}

%
%
%
We start with the definition
\begin{equation}\label{eq:problem2}
(\hat{z}_1, \ldots, \hat{z}_n)=\argmin_{\sum_{i=1}^n z_i=1} \ln\det\Big(\sum_{i=1}^ne^{z_i}\bx_i\bx_i^\top\Big)
\end{equation}
and
\begin{equation}\label{eq:problem3}
\hat{\Sigma}_z=\sum_{i=1}^ne^{\hat{z}_i}\bx_i\bx_i^\top.
\end{equation}

The solution to \eqref{eq:problem2} is unique, which follows from the convexity of the objective function (see Wiesel~\cite[Lemma 4]{Wiesel2012LSE}). Besides, noticing the equivalence between \eqref{eq:problem2} and \eqref{eq:problem1} (by plugging $w_i=  e^{z_i}/(\sum_{i=1}^ne^{z_i})$ and $z_i=\ln w_i - (\sum_{i=1}^n \ln w_i -1)/n$, there exists $c_1>0$ such that $\hat{\Sigma}_z=c_1\hat{\Sigma}$.

Next we will prove that $\hat{\Sigma}_z$ satisfies
 \begin{equation} \label{eq:tyler1}
\sum_{i=1}^n \frac{\bx_i\bx_i^\top}{\bx_i^\top\hat{\Sigma}_z^{-1}\bx_i}=c \hat{\Sigma}_z,\,\,\,\text{for some $c>0$}.
\end{equation}

By checking the directional derivative of the objective function in \eqref{eq:problem2}, for any $(\delta_1, \ldots, \delta_n)$ with $\sum_{i=1}^n\delta_i=0$,
\[
\sum_{i=1}^n \delta_i e^{\hat{z}_i}\bx_i^\top\hat{\Sigma}_z^{-1}\bx_i =0.
\]
Therefore, there exists $c_2$ such that
\begin{equation}\label{eq:Sigmaz}
e^{\hat{z}_i}\bx_i^\top\hat{\Sigma}_z^{-1}\bx_i = c_2,\,\,\text{for all $1\leq i\leq n$}.
\end{equation}
Therefore, \eqref{eq:tyler1} is proved by applying \eqref{eq:Sigmaz} and \eqref{eq:problem3}:
\[
\sum_{i=1}^n \frac{\bx_i\bx_i^\top}{\bx_i^\top\hat{\Sigma}_z^{-1}\bx_i}
=\sum_{i=1}^n e^{\hat{z}_i}{\bx_i\bx_i^\top}/c_2
=\hat{\Sigma}_z/c_2,
\]

Since $\hat{\Sigma}_z=c_1\hat{\Sigma}$, \eqref{eq:tyler1} also holds when $\hat{\Sigma}_z$ is replaced by $\hat{\Sigma}$:
\begin{equation}\label{eq:tyler2}
\sum_{i=1}^n \frac{\bx_i\bx_i^\top}{\bx_i^\top\hat{\Sigma}^{-1}\bx_i}=c \hat{\Sigma},\,\,\,\text{for some $c>0$}.
\end{equation}

At last, we will prove that $\hat{\Sigma}$ satisfies the definition of Tyler's M-estimator in \eqref{eq:tyler0}, that is, the constant $c$ in \eqref{eq:tyler2} is given by $c=n/p$. For the objective function
\[
F(\Sigma)=\sum_{i=1}^n\ln(\bx_i^\top\Sigma^{-1}\bx_i)+c\ln\det(\Sigma),
\]
its derivative with respect to $\Sigma^{-1}$ is given by
\[
\sum_{i=1}^n\bx_i^\top(\bx_i^\top\Sigma^{-1}\bx_i)^{-1}\bx_i-c\Sigma.
\]
Therefore, $\hat{\Sigma}$ is a stationary point of $F(\Sigma)$. Since $F(\Sigma)$ is geodesically convex (argument follows directly from Wiesel~\cite{Wiesel2012LSE} and Zhang~\cite{Zhang2012}), $\hat{\Sigma}$ is the global minimizer of $F(\Sigma)$.

However, the minimizer of $F(\Sigma)$ exists only when $c=n/p$. Since $F(a\bI)=\sum_{i=1}^n \ln(\bx_i^\top\bx_i)-n\ln a+c\,p\ln a$, we have
\[
F(a\bI)\rightarrow -\infty\,\,\,\,\begin{cases} &\text{as $a\rightarrow 0$, if $c>n/p$}\\&\text{as $a\rightarrow \infty$, if $c<n/p$}\end{cases}.
\]

Therefore, the constant $c$ in \eqref{eq:tyler2} is given by $c=n/p$, and Lemma~\ref{thm:alternate_tyler} is proved.

\subsection{Proof of Lemma~\ref{thm:weights}}
We start with an outline of the proof, which consists of three parts. First, we rewrite the constrained optimization problem \eqref{eq:problem1} to the problem of finding the root of $g(\bw)$, which will be defined in \eqref{eq:definition_g}. Since the root of $g(\bw)$ is $n\hat{\bw}-1$, we only need to show the convergence of the root of $g(\bw)$. Second, we will show that $g(\b0)$ converges to $\b0$, $\grad g(\b0)$ is large and the variation of $\grad g(\bw)$ is bounded. Finally, we will use a perturbation analysis and the observations on $g(\b0)$ and $\grad g(\bw)$ to show that the root of $g(\bw)$ converges to $\b0$.

The proof depends on Lemma~\ref{lemma:deri}, Lemma~\ref{lemma:hessian} and Lemma~\ref{lemma:pertubation}, and their proofs are postponed to subsequent sections.


\begin{lemma}\label{lemma:pertubation}
%
For a function $f(\bw): \reals^p\rightarrow\reals^p$, assume that $\grad f(\mathbf{\b0})=\bI$, and $\|\grad f(\bw)-\grad f(\b0)\|_{\infty}=\max_{i\leq i\leq p}\|\grad f_i(\bw)-\grad f_i(\b0)\|_{\infty}< C_5 \|\bw\|_\infty$ for $\|\bw\|_\infty\leq 1$, and $\|f(\b0)\|_{\infty}< \min(1/9C_5,1/3)$. Then there exists $\tilde{\bw}$ such that $\|\tilde{\bw}\|_{\infty}<3 \|f(\b0)\|_{\infty}$ and $ f(\tilde{\bw})=\mathbf{0}$.
\end{lemma}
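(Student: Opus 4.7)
The plan is to recast ``find a root of $f$ near $\b0$'' as a fixed-point problem. Define
\[
T(\bw) := \bw - f(\bw),
\]
whose fixed points are precisely the roots of $f$, and consider its restriction to the closed $\ell_\infty$ ball $B := \{\bw : \|\bw\|_\infty \le 3\|f(\b0)\|_\infty\}$. Since $\grad T(\b0) = \bI - \grad f(\b0) = \b0$ and $T(\b0) = -f(\b0)$ already lies well inside $B$, the goal is to show that $T$ is a strict contraction on $B$ in the $\ell_\infty$ norm and invoke the Banach fixed-point theorem.

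For the self-mapping property $T(B)\subseteq B$, I would write
\[
T(\bw) - T(\b0) = \int_0^1 \big(\grad f(\b0) - \grad f(t\bw)\big)\bw \, dt
\]
and apply the Jacobian-perturbation hypothesis to obtain a quadratic remainder estimate of the form $\|T(\bw) - T(\b0)\|_\infty \le \tfrac{C_5}{2}\|\bw\|_\infty^2$. For $\bw \in B$, combining this with $\|T(\b0)\|_\infty = \|f(\b0)\|_\infty$ and the assumption $\|f(\b0)\|_\infty < 1/(9C_5)$ yields $\|T(\bw)\|_\infty \le \|f(\b0)\|_\infty + \tfrac{9C_5}{2}\|f(\b0)\|_\infty^2 \le \tfrac{3}{2}\|f(\b0)\|_\infty$, comfortably inside $B$. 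An entirely analogous calculation applied to $T(\bw_1)-T(\bw_2)$ for $\bw_1,\bw_2 \in B$ gives the contraction estimate $\|T(\bw_1)-T(\bw_2)\|_\infty \le 3C_5\|f(\b0)\|_\infty\cdot\|\bw_1-\bw_2\|_\infty < \tfrac{1}{3}\|\bw_1-\bw_2\|_\infty$. The Banach fixed-point theorem (equivalently, Picard iteration from $\bw_0=\b0$) then produces the required $\tilde{\bw} \in B$ with $f(\tilde{\bw})=\b0$.

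The only real subtlety is in the bookkeeping of the Jacobian step: the hypothesis $\|\grad f(\bw)-\grad f(\b0)\|_\infty < C_5\|\bw\|_\infty$ must be used to control the $\ell_\infty$ norm of the vector $(\grad f(\b0)-\grad f(t\bw))\bw$, which amounts to reading the hypothesis as a bound on the Jacobian perturbation viewed as an operator acting on vectors whose $\ell_\infty$ size is already constrained by membership in $B$. Once this interpretation is fixed, the numerical constants $3$, $1/3$, and the threshold $1/(9C_5)$ appearing in the statement are exactly those forced by the self-mapping and contraction inequalities above, so the proof reduces to two short integral-remainder calculations plus one application of a standard fixed-point theorem.
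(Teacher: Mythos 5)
Your proof is correct, and it takes a genuinely different route from the paper's. The paper works with the heavily damped map $h(\bw) = \bw - f(\bw)/(4+9C_5)$, establishes only the self-mapping property $h(\calA)\subseteq\calA$ on the cube $\calA = [-3\eta,3\eta]^n$ via a three-way coordinatewise case analysis (using the componentwise bounds $w_j - C_5\|\bw\|_\infty^2 \le f_j(\bw)-f_j(\b0) \le w_j + C_5\|\bw\|_\infty^2$), and then invokes the Schauder fixed-point theorem, which does not require a contraction. You instead use the undamped map $T(\bw)=\bw-f(\bw)$, treat the Jacobian-perturbation hypothesis as an operator-norm bound (which is what $\|\cdot\|_\infty$ on matrices means), and prove both self-mapping (quadratic remainder $\tfrac{C_5}{2}\|\bw\|_\infty^2$, giving $\|T(\bw)\|_\infty \le \tfrac{3}{2}\eta$ on $B$) and a uniform Lipschitz constant $3C_5\eta < 1/3$, so the Banach fixed-point theorem applies. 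Both uses of the hypothesis are legitimate because $3\eta \le 1$ guarantees the interpolating points stay in the region $\|\bw\|_\infty\le 1$ where the Jacobian bound holds. Your route is more elementary (contraction mapping rather than Schauder), avoids the damping constant entirely, automatically yields $\|\tilde\bw\|_\infty \le \tfrac{3}{2}\eta < 3\eta$ (so the strict inequality in the conclusion is immediate rather than implicit), and additionally gives local uniqueness and a convergent Picard iteration — none of which the Schauder argument provides. The one thing you should state explicitly rather than leave as "the only real subtlety" is that for $\bw_1,\bw_2 \in B$ the segment $\bw_2+t(\bw_1-\bw_2)$ remains in $B$ by convexity, so $\|\grad f(\b0)-\grad f(\bw_2+t(\bw_1-\bw_2))\|_\infty < 3C_5\eta$; with that spelled out the argument is complete.
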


\begin{lemma}\label{lemma:deri}
If $\bx_i\sim \mathcal{N}(\b0,\bI)$ for all $1\leq i\leq n$, and $\bS=\sum_{i=1}^n\bx_i\bx_i^\top/n$, then there exists $c,C,c'>0$ such that for any $\eps<c'$,
 \[\Pr\left(\max_{1\leq i\leq n}|\frac{1}{p}\bx_i^\top\bS^{-1}\bx_i-1|<\eps\right)\geq 1-C ne^{-c\eps^2n}.\]
\end{lemma}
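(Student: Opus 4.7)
My plan is a leave-one-out decoupling. Fix $i$ and set $\bS_{-i} := \bS - \bx_i\bx_i^T/n$, which is independent of $\bx_i$. The Sherman--Morrison identity gives
\[
\bx_i^T\bS^{-1}\bx_i \;=\; \frac{Q_i}{1 + Q_i/n}, \qquad Q_i \;:=\; \bx_i^T\bS_{-i}^{-1}\bx_i,
\]
so, setting $\phi(q) := q/(1 + (p/n)q)$ and $y_n := p/n$, the target becomes $Q_i/p = 1/(1-y_n) + O(\eps)$ uniformly in $i$; note that $\phi$ is monotone increasing, $1$-Lipschitz on $[0,\infty)$, and satisfies $\phi(1/(1-y_n)) = 1$.

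Conditioning on $\bS_{-i}$ and applying the Hanson--Wright inequality to the independent Gaussian vector $\bx_i$ yields
\[
\Pr\bigl(|Q_i - \tr(\bS_{-i}^{-1})| > \eps p \bigm| \bS_{-i}\bigr) \;\leq\; 2\exp\bigl(-c\min(\eps^2 p^2/\|\bS_{-i}^{-1}\|_F^2,\ \eps p / \|\bS_{-i}^{-1}\|)\bigr).
\]
On the event $\{\smin(\bS_{-j}) \geq \tfrac{1}{2}(1-\sqrt{y})^2 \text{ for every } j\}$---which, by \cite[Theorem II.13]{Davidson2001} and a union bound, fails with probability at most $Cne^{-cn}$---both $\|\bS_{-i}^{-1}\|$ and $\|\bS_{-i}^{-1}\|_F^2/p$ are bounded by a constant, so the conditional bound simplifies to $2e^{-c\eps^2 n}$. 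A second Sherman--Morrison computation gives $\tr(\bS_{-i}^{-1}) - \tr(\bS^{-1}) = (\bx_i^T\bS_{-i}^{-2}\bx_i/n)/(1 + Q_i/n) = O(1)$ on the same event, since Hanson--Wright controls the numerator via $\bx_i^T\bS_{-i}^{-2}\bx_i \leq 2\tr(\bS_{-i}^{-2}) = O(p)$ with exponential probability. Hence the traces $\tau_i := \tr(\bS_{-i}^{-1})/p$ all agree up to $O(1/p) = o(\eps)$, and a union bound over $i$ places all $Q_i/p$ in a common interval of length at most $3\eps$ with probability at least $1 - Cne^{-c\eps^2 n}$.

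To identify this common value, I invoke the deterministic identity $\sum_i \bx_i^T\bS^{-1}\bx_i = n\tr(\bS^{-1}\bS) = np$, which after the Sherman--Morrison substitution reads $\tfrac{1}{n}\sum_i \phi(Q_i/p) = 1$. Monotonicity of $\phi$ forces $\phi^{-1}(1) = 1/(1-y_n)$ to lie in the same $3\eps$-interval containing all $Q_i/p$, and the $1$-Lipschitzness of $\phi$ then converts this into $|\bx_i^T\bS^{-1}\bx_i/p - 1| = O(\eps)$ uniformly in $i$. The main obstacle is the correlation between $\bx_i$ and $\bS^{-1}$; the Sherman--Morrison decoupling resolves it cleanly, and exploiting the global identity $\sum_i \bx_i^T\bS^{-1}\bx_i = np$ lets me pin down the limit $1/(1-y)$ without any direct analysis of the Stieltjes transform of a Wishart inverse.
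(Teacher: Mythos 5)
Your argument is correct, and it takes a genuinely different route from the paper. The paper exploits rotational invariance of $N(\b0,\bI)$ directly: writing the SVD $\bX=\bU\Sigma\bV^T$, it observes $\frac{1}{p}\bx_i^T\bS^{-1}\bx_i = \frac{n}{p}\|\bu_i\|^2$, where $\bu_i$ is a row of the Haar-distributed matrix $\bU$, so the quadratic form has exactly the law of $\frac{n}{p}\cdot\sum_{j\le p}g_j^2/\sum_{j\le n}g_j^2$; the lemma then reduces to concentration of a Beta-type ratio of chi-squares via \citep[Corollary 2.3]{Barvinok710}. Your leave-one-out argument instead decouples via Sherman--Morrison, controls $Q_i=\bx_i^T\bS_{-i}^{-1}\bx_i$ around $\tr(\bS_{-i}^{-1})$ by Hanson--Wright conditionally on $\bS_{-i}$ (with the smallest-singular-value bound of Davidson--Szarek ensuring $\|\bS_{-i}^{-1}\|=O(1)$), shows the centers $\tr(\bS_{-i}^{-1})/p$ agree across $i$ up to $O(1/p)$ by a second Sherman--Morrison step, and then pins the common limit $1/(1-y_n)$ by the exact algebraic identity $\sum_i\bx_i^T\bS^{-1}\bx_i=\tr(\bS^{-1}\cdot n\bS)=np$, i.e. $\frac{1}{n}\sum_i\phi(Q_i/p)=1$ with $\phi$ monotone and $1$-Lipschitz. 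The trade-off: the paper's proof is shorter and entirely elementary once one invokes spherical concentration, but it is tied to the orthogonal invariance of the isotropic Gaussian; your resolvent-style argument is a bit longer but is the one that would survive if the Gaussian were replaced by generic i.i.d. sub-Gaussian entries (where Hanson--Wright still applies but Haar invariance of $\bU$ does not), and it identifies the limiting value $1/(1-y)$ for $Q_i/p$ without appealing to any external random-matrix fact about $\tr(\bS^{-1})/p$. This is closer in spirit to the deterministic-equivalent machinery of Couillet and McKay, which the paper cites as a parallel line of argument.

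One small point worth recording if you write this up in full: the statement holds ``for any $\eps<c'$,'' including $\eps\ll 1/p$, whereas step 3 only gives the centers $\tau_i=\tr(\bS_{-i}^{-1})/p$ agreeing up to $O(1/p)$. This causes no harm because for $\eps\lesssim 1/\sqrt{n}$ the bound $1-Cne^{-c\eps^2 n}$ is already vacuous; but you should say so explicitly, e.g.\ by choosing $C$ large enough that the claimed probability is $\le 0$ whenever $O(1/p)>\eps$, so the interesting regime $\eps\gtrsim\sqrt{\log n/n}\gg 1/p$ is the only one that matters.
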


\begin{lemma}\label{lemma:hessian}For the $n\times n$ matrix $\bA$ defined by $\bA_{ij}=(\bx_i^\top\bS^{-1}\bx_j)^2/(n\,p)$, (a)
$\|\bA\|_{\infty}<2$ with probability $1-Cn\exp(-cn)$.

(b) There exists $c=c(p,n)>0$ and $C_2=C_2(y)>0$  such that $\|(\bI-\bA+c \mathbf{1}\mathbf{1}^\top)^{-1}\|_{\infty}<C_2$ with probability $1-Cn\exp(-cn)$.
\end{lemma}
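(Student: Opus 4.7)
The plan is to handle the two claims separately.

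For part (a), I would use the identity $\sum_{j=1}^n \bx_j\bx_j^T = n\bS$ to compute the row sum of $\bA$:
\[
\sum_{j=1}^n \bA_{ij} \;=\; \frac{1}{np}\,\bx_i^T\bS^{-1}\Bigl(\sum_{j=1}^n \bx_j\bx_j^T\Bigr)\bS^{-1}\bx_i \;=\; \frac{1}{p}\,\bx_i^T\bS^{-1}\bx_i.
\]
Since $\bA_{ij}\ge 0$, $\|\bA\|_\infty$ equals this maximum row sum, and Lemma~\ref{lemma:deri} with $\eps<1$ gives $\|\bA\|_\infty < 1+\eps < 2$ on its good event.

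For part (b), the guiding heuristic is that $\bA \approx y\bI + \tfrac{1-y}{n}\mathbf{1}\mathbf{1}^T$. This follows both from part (a) (which forces $\bA\mathbf{1}\approx \mathbf{1}$) and from the representation $\bA=(n/p)\bP\circ\bP$ with $\bP=\bX^T(\bX\bX^T)^{-1}\bX$ a uniformly distributed rank-$p$ projection of $\mathbb{R}^n$ (by rotational invariance of the Gaussian), for which $\mathbb{E}[\bP_{ii}^2]\approx y^2$ and $\mathbb{E}[\bP_{ij}^2]\approx y(1-y)/n$. I would take $c := (1-y)/n$, so that the mean-field approximation of $\bI-\bA+c\mathbf{1}\mathbf{1}^T$ is exactly $(1-y)\bI$, with inverse trivially of $\infty$-norm $1/(1-y)$. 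To upgrade to the true $\bA$, I would solve $(\bI-\bA+c\mathbf{1}\mathbf{1}^T)\bu=\bv$ using the ansatz $\bu=\bar u\,\mathbf{1}+\bu_\perp$ with $\bu_\perp\perp\mathbf{1}$: taking the inner product of the equation with $\mathbf{1}$ and using $\bA\mathbf{1}\approx\mathbf{1}$ from part (a) recovers $|\bar u|\lesssim \|\bv\|_\infty/(cn)$, and $\bu_\perp$ is obtained by inverting $(\bI-\bA)$ on $\mathbf{1}^\perp$, modulo small corrections that are already controlled by part (a).

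The main obstacle is bounding $\|((\bI-\bA)|_{\mathbf{1}^\perp})^{-1}\|_\infty$ by a constant $C_2(y)$ that is independent of $n$. A random-matrix spectral-gap argument on $\bA=(n/p)\bP\circ\bP$ (using rotational invariance and standard concentration for $\bP$) shows $\lambda_2(\bA)\le 1-\delta(y)$ with, e.g., $\delta(y)=(1-y)/2$, so $\|((\bI-\bA)|_{\mathbf{1}^\perp})^{-1}\|_{\mathrm{op}}\le 1/\delta(y)$; but the naive inequality $\|\cdot\|_\infty\le\sqrt{n}\,\|\cdot\|_{\mathrm{op}}$ loses a fatal $\sqrt{n}$ factor. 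I would close this gap via the Neumann expansion $\sum_k (\bA|_{\mathbf{1}^\perp})^k$ together with an \emph{entrywise} estimate on the iterates: because the non-Perron eigenvectors of $\bA$ are delocalized (each coordinate of order $n^{-1/2}$, again by rotational invariance of $\bP$), one expects $\bA^k\bv$ to decay in $\infty$-norm at rate $(1-\delta(y))^k$ with a constant depending only on $y$. Establishing that entrywise geometric decay is the technical heart of part (b).
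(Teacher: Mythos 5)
Part (a) of your proposal is correct and is essentially the paper's own argument: compute the row sum of $\bA$ via $\sum_j \bx_j\bx_j^T = n\bS$ to get $\sum_j \bA_{ij} = \bx_i^T\bS^{-1}\bx_i/p$, then invoke Lemma~\ref{lemma:deri}.

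For part (b) there is a genuine gap. Your plan splits $\bu=\bar u\,\mathbf{1}+\bu_\perp$, handles the Perron direction directly, and then tries to bound $\|((\bI-\bA)|_{\mathbf{1}^\perp})^{-1}\|_\infty$ by a constant via a spectral gap plus eigenvector delocalization. You correctly flag that this last step is the crux, but as sketched it does not close: even with a uniform spectral gap $\lambda_2(\bA)\le 1-\delta(y)$ and delocalized eigenvectors $\|\psi_\lambda\|_\infty\sim n^{-1/2}$, the crude bound $\|\bA^k\bv\|_\infty\le\sum_\lambda|\lambda|^k\,|\langle\bv,\psi_\lambda\rangle|\,\|\psi_\lambda\|_\infty$ still carries a factor of order $n$ (there are $\sim n$ eigenvalues and $|\langle\bv,\psi_\lambda\rangle|$ is only controlled through $\|\bv\|_2\le\sqrt{n}\|\bv\|_\infty$); one would need substantial cancellation or a much finer structural statement about $\bA^k$ to make the entrywise geometric decay come out $n$-free. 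In addition, proving eigenvector delocalization for the Hadamard square of a random projection is itself a nontrivial random-matrix input that the lemma is not entitled to assume.

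The paper avoids all of this with a simpler, purely $\ell^\infty$ argument that does not decompose along $\mathbf{1}$ at all. The key observation is the opposite of ``the off-diagonal entries are small'': many off-diagonal entries are \emph{not} small. Concretely, Lemma~\ref{lemma:proof31} shows that for each fixed $i$, at least $0.75n$ of the inner products satisfy $|\bx_i^T\bx_j|>c_4\sqrt{p}$ with overwhelming probability; combined with $\|\bS\|\le C_4$ this gives $\bA_{ij}\ge (c_4/C_4)^2/n$ for at least $0.75n$ indices $j$ in every row. Choosing $c_0=(c_4/C_4)^2/n$ (so $c_0$ depends on $p,n$ but $c_0 n$ is a positive constant in $y$), the row sum of $|\bA_{ij}-c_0|$ drops by at least a fixed constant below the row sum of $\bA_{ij}\approx 1$. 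Hence $\|\bA-c_0\mathbf{1}\mathbf{1}^T\|_\infty\le C_3(y)<1$ with probability $1-Cn e^{-cn}$, and the Neumann series $(\bI-(\bA-c_0\mathbf{1}\mathbf{1}^T))^{-1}=\sum_{k\ge 0}(\bA-c_0\mathbf{1}\mathbf{1}^T)^k$ converges in $\|\cdot\|_\infty$, giving $C_2=1/(1-C_3)$. If you want to salvage your route, the piece you need to import is precisely this entrywise lower bound on a constant fraction of each row of $\bA$; with it, your choice of $c$ (up to the specific constant) collapses to the paper's direct contraction estimate, and the spectral/delocalization machinery becomes unnecessary.
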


We start the first part of the proof with the construction of $g(\bw)$. We let \begin{equation}\label{eq:definition_g}
g(\bw)=\grad G(\bw + \mathbf{1}),
\end{equation}
where
\begin{equation}\label{eq:unconstrained}
G(\bw)= - \sum_{i=1}^n\ln {w}_i + \frac{n}{p} \ln\det(\sum_{i=1}^n{w}_i\bx_i\bx_i^\top)+ \frac{c_0}{2}(\sum_{i=1}^n w_i-n)^2,
\end{equation}
and the constant $c_0$ will be specified later before \eqref{eq:A_i_row}.

It is easy to prove that the minimizer of $G(\bw)$ and the zeros of $\grad G(\bw)$ must satisfy $\sum_{i=1}^n w_i = n$ (otherwise $n\bw/(\sum_{i=1}^n w_i)$ is a better minimizer and $\grad G(\bw)$ is nonzero). Therefore minimizing \eqref{eq:unconstrained} is equivalent to minimizing $- \sum_{i=1}^n\ln {w}_i + n/p\cdot \ln\det(\sum_{i=1}^n{w}_i\bx_i\bx_i^\top)$ with constraint $\sum_{i=1}^n w_i = n$, which is the same as \eqref{eq:problem1} except for the constraint. Noticing that a scaling of $\bw$ increases $- \sum_{i=1}^n\ln {w}_i + n/p\cdot \ln\det(\sum_{i=1}^n{w}_i\bx_i\bx_i^\top)$ by a constant only depending on the scale, the minimizer of \eqref{eq:unconstrained} is unique and it is $n\hat{\bw}$, where $\hat{\bw}$ is defined in \eqref{eq:problem1}. By the convexity of its equivalent problem \eqref{eq:problem2}, the root of $g(\bw)$ is also unique and it is $n\hat{\bw}-1$. 

For the second part of the proof, we start by proving that $g(\b0)$ is small. By calculation, the $i$-th component of function $g(\bw)$ is
\[
g_i(\bw)= - \frac{1}{w_i+1} + \frac{n}{p}\bx_i^\top (n\bS+\sum_{i=1}^n{w}_i\bx_i\bx_i^\top)^{-1}\bx_i +c_0\sum_{i=1}^n w_i.
\]
Applying Lemma~\ref{lemma:deri},
\begin{equation}\label{eq:condition1}\text{
$\Pr\left(\|g(\b0)\|_\infty<\eps\right)\geq 1-C ne^{-c\eps^2n}$.
}\end{equation}
Now we will prove that $\grad g(\b0)$ is bounded from below. By calculation, its $(i,j)$-th entry is \[
\big\{\grad g(\bw)\big\}_{i,j}=I(i=j)\frac{1}{(w_i+1)^2} - \frac{n}{p} \Big\{\bx_i^\top\Big(n\bS+\sum_{i=1}^nw_i\bx_i\bx_i^\top\Big)^{-1}\bx_j\Big\}^2 +c_0.
\]
Applying Lemma~\ref{lemma:hessian}, \begin{equation}\label{eq:condition2}\text{$\|\{\grad g(\b0)\}^{-1}\|_\infty<C_2$ with probability $1-Cne^{-cn}$.}\end{equation}

Now we bound the variation of $\grad g(\bw)$
 in the region $\|\bw\|_\infty<1/2$. Apply $|1/(w_i+1)^2-1|< 3 |w_i-1|\leq 3\|\bw\|_\infty$ and coordinatewise comparison,
\[
|\grad_{i,j} g(\bw)-\grad_{i,j} g(\b0)|\leq I(i=j)\left(3\|\bw\|_\infty\right)+3\|\bw\|_\infty\cdot \frac{n}{p}|\bA_{ij}|.
\]
Therefore, the variation of $\grad g(\bw)$ is bounded by \begin{equation}\label{eq:condition3}\text{$\|\grad g(\bw)-\grad g(\b0)\|_\infty < (3+3n\|\bA\|_\infty/p) \|\bw\|_\infty$.}\end{equation}

At last we finish the third part of the proof of Lemma~\ref{thm:weights} by applying Lemma~\ref{lemma:pertubation} to $f(\bw)=\{\grad g(\b0)\}^{-1}g(\bw/2)$. It is easy to verify that $\grad f(\b0)=\bI$. Due to \eqref{eq:condition1} and \eqref{eq:condition2}, $\|f(\b0)\|_{\infty}\leq \|(\grad g(\b0))^{-1}\|_\infty \|g(\b0)\|_\infty\rightarrow 0$ in the same rate as in \eqref{eq:condition1} and  $\|f(\b0)\|_{\infty}< \min(1/9C_5,1/3)$ holds with probability $1-Cne^{-cn}$.  Due to \eqref{eq:condition1}, \eqref{eq:condition3}, and the boundedness of $\|\bA\|_\infty$ (Lemma~\ref{lemma:hessian}), $\|\grad f(\bw)-\grad (\b0)\|_{\infty}< C_5 \|\bw\|_\infty$ also holds with probability $1-Cne^{-cn}$. Therefore the assumption in Lemma~\ref{lemma:pertubation} holds with probability $1-Cne^{-cn}$ and there exists $\tilde{\bw}$ such that $f(\tilde{\bw})=0$ and \begin{equation}\|\tilde{\bw}\|_{\infty}<3 \|f(\b0)\|_{\infty}.\label{eq:tildebw}\end{equation}

When $f(\tilde{\bw})=0$, we have $g(2\tilde{\bw})=0$ and by previous discussion $2\tilde{\bw}=n\hat{\bw}-1$. therefore \eqref{eq:tildebw} gives
\[
\|n\hat{\bw}-1\|_{\infty}<6 \|f(\b0)\|_{\infty}.
\]

Since $\|f(\b0)\|_{\infty}$ converges to $0$ in the rate as in \eqref{eq:condition1}, $\|n\hat{\bw}-1\|_{\infty}$ converges in the same rate and Lemma~\ref{thm:weights} is proved.
\subsubsection{Proof of Lemma~\ref{lemma:pertubation}}
\begin{proof}
When $\|\bw\|_\infty\leq 1$,
\begin{align}\label{eq:derivative_diff}
&f_j(\bw)-f_j(\mathbf{0}) =\int_{t=0}^{1}\left\langle\be_j \bw^\top,\grad f(t\,\bw)\right\rangle\di t\\\nonumber
=&\int_{t=0}^{1}\left\langle\be_j \bw^\top,\grad f(t\,\bw)-\grad f(\b0) +\bI\right\rangle\di t= w_j+ \int_{t=0}^{1}\bw^\top \{\grad f(t\,\bw)-\grad f(\b0)\}\be_j\di t
\\\leq& w_j+ \|\int_{t=0}^{1}\bw^\top \{\grad f(t\,\bw)-\grad f(\b0)\}\|_\infty\leq w_j + C_5 \|\bw\|_\infty^2.\nonumber
\end{align}
Similarly
\begin{equation}\label{eq:derivative_diff2}
f_j(\bw)-f_j(\mathbf{0})
\geq -C_5 \|\bw\|_\infty^2 + w_j.
\end{equation}


To prove it, we consider the continuous mapping $h(\bw)=\bw - f(\bw)/(4+9C_5)$ and will prove that $h$ maps $\calA$ to itself, where
\[
\calA=\{\bw: \bw\in [-3\eta,3\eta]^n\}\,\,\text{and  $\eta=\|f(\mathbf{0})\|_{\infty}$}.
\]

1. $|w_i|<2\eta$. Then apply \eqref{eq:derivative_diff} and \eqref{eq:derivative_diff2} (they are applicable since for any $\bw\in\calA$, $\|\bw\|_\infty\leq 1$), we have $|f_i(\bw)|< |f_i(\mathbf{0})|+ C_5 \|\bw\|_\infty^2 + |w_i|\leq \eta + C_5 (3\eta)^2 +3\eta <(4+9C_5) \eta$ ($\eta^2<\eta$ since $\eta<1$). Therefore, $|h_i(\bw)|\leq |w_i| + |f_i(\bw)|/(4+9C_5)\leq 3\eta$.

2. $w_i>2\eta$, then applying \eqref{eq:derivative_diff2},
\[
f_i(\bw)\geq -|f_i(\b0)|+w_i-C_5\|\bw\|_\infty^2\geq -\eta + 2\eta -C_5 (3\eta)^2.
\]
Since $\eta<1/9C_5$, we have $f_i(\bw)<0$ and therefore $h_i(\bw)\leq w_i\leq 3\eta$.

Similar to case 1 we can prove that $h_i(\bw)\geq  -3\eta$. Therefore $|h_i(\bw)|<3\eta$.

3. Similar to case 2, when $w_i<-2\eta$,  $|h_i(\bw)|<3\eta$.

Therefore the continuous mapping $h$ maps the convex, compact set $\calA$ to itself. By Schauder fixed point theorem, $h(\bx)$ has a fixed point in $\calA$ and Lemma~\ref{lemma:pertubation} is proved with $\tilde{\bw}$ being the fixed point.
\end{proof}
\subsubsection{Proof of Lemma~\ref{lemma:deri}}
Assuming the SVD decomposition of $\bX$ is $\bX=\bU\Sigma\bV^\top$, where $\bU\in\reals^{n\times p}$ and $\bU^\top\bU=\bI$. Since $\bx_i\sim \mathcal{N}(\b0,\bI)$ for all $1\leq i\leq n$, $\bU$ is uniformly distributed over the space of all orthogonal $n\times p$ matrices. Since \begin{equation}\label{eq:bU}\bX\bS^{-1}\bX=(\bU\Sigma\bV^\top)(\frac{1}{n}\bV\Sigma^2\bV^\top)^{-1}(\bU\Sigma\bV^\top),
\end{equation}
if we write the row of $\bU$ by $\bu_1, \ldots, \bu_n$, then $\frac{1}{n}\bx_i\bS^{-1}\bx_i=\bu_i^\top\bu_i=\|\bu_i\|^2$.

Since $\bU$ can be considered as the first $p$ columns of a random $n\times n$ orthogonal matrix (with haar measure over the set of all $n\times n$ orthogonal matrices), $\bu_i$ can be considered as the first $p$ entries from a random vector of length $n$ that is sampled from the uniform sphere in $\reals^n$.

Therefore, $\|\bu_i\|^2\sim \sum_{j=1}^p g_j^2/\sum_{j=1}^n g_j^2$ for i.i.d. random variables $\{g_j\}_{j=1}^n\sim \mathcal{N}(0,1)$. Applying the the concentration result by Barvinok~\cite[Corollary 2.3]{Barvinok710}, we have
\begin{equation}\label{eq:concentration1}
\Pr\left\{\sum_{i=1}^ng_i^2\geq \frac{n}{1-\eps}\right\}\leq e^{-\eps^2n/4}
\end{equation}
and
\begin{equation}\label{eq:concentration2}
\Pr\left\{\sum_{i=1}^ng_i^2\leq {n}(1-\eps)\right\}\leq e^{-\eps^2n/4},
\end{equation}
therefore
\begin{align*}
&\Pr\left\{ \frac{p(1-\eps)^2}{n}\leq \|\bu_1\|^2\leq \frac{p}{n(1-\eps)^2}\right\}
\geq \Pr\left\{{p}(1-\eps)\leq \sum_{i=1}^p g_i^2\leq \frac{p}{1-\eps}\right\}
\\&+\Pr\left\{{n}(1-\eps)\leq \sum_{i=1}^n g_i^2\leq \frac{n}{1-\eps}\right\}
\geq 1- 2e^{-\eps^2p/4}-2e^{-\eps^2n/4}.
\end{align*}

For $\eps\leq 0.1$, we have
\begin{align}
&\Pr\left\{\max_{1\leq i\leq n}|\frac{1}{p}\bx_i^\top\bS^{-1}\bx_i-1|\leq \eps\right\}
\geq 1- n \Pr\left\{|\|\bu_1\|^2-\frac{p}{n}|>  \frac{p}{n}\eps\right\}
\nonumber\\\geq &1-n\left[1-\Pr\left\{ \frac{p(1-\eps/3)^2}{n}\leq \|\bu_1\|^2\leq \frac{p}{n(1-\eps/3)^2}\right\}\right]
\\\geq& 1-2ne^{-\eps^2p/36}-2ne^{-\eps^2n/36},\label{eq:deri1}
\end{align}
where the second inequality follows from $1-3\eps\leq (1-\eps)^2$ and $1/{(1-\eps)^2}\leq 1+3\eps$.

\subsubsection{Proof of Lemma~\ref{lemma:hessian}}

(a) Since $\|\bA\|_\infty=\max_{1\leq i\leq n} (\sum_{1\leq j\leq n} \bA_{ij})$, and
\begin{align}
&\sum_{1\leq j\leq n} \bA_{ij}=
\sum_{1\leq j\leq n} \frac{1}{np}\bx_i^\top\bS^{-1}\bx_j \bx_j^\top\bS^{-1}\bx_i
= \bx_i^\top\bS^{-1} (\sum_{1\leq j\leq n} \bx_j\bx_j^\top)\bS^{-1}\bx_i/np
\\=&\bx_i^\top\bS^{-1} (n\bS)\bS^{-1}\bx_i/np =\bx_i^\top\bS^{-1}\bx_i/p,\label{eq:A_infinity}
\end{align}
it follows from \eqref{eq:deri1} with $\eps=0.1$ that $\|\bA\|_{\infty}<2$ holds with probability $1-Cn\exp(-cn)$.

(b) We first prove that there exists $C_3=C_3(y)$ such that \begin{equation}\|\bA-c_0 \mathbf{1}\mathbf{1}^\top\|_{\infty}\leq C_3 < 1\,\,\,\,\text{with probability $1-Cn\exp(-cn)$.}\label{eq:proof3}\end{equation}


We start with the proof of \eqref{eq:proof3} with another lemma:
\begin{lemma}\label{lemma:proof31}
There exists a $c_4>0$ such that with probability $1-C\exp(-cn)$,
\[\sum_{j=1}^n I(\bx_1^\top\bx_j>c_4\sqrt{p}) > 0.75 n.\]
\end{lemma}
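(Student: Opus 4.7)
The core idea is to condition on $\bx_1$. Since $\bx_2,\ldots,\bx_n$ are i.i.d.\ $N(\b0,\bI)$ independent of $\bx_1$, the scalars $\{\bx_1^T \bx_j\}_{j=2}^n$ are, conditionally on $\bx_1$, i.i.d.\ $N(0,\|\bx_1\|^2)$ random variables. The indicator sum in the statement is then, up to the trivially satisfied $j=1$ term, a Binomial count that I can control by a Chernoff bound.

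First, I would restrict attention to the high-probability event $\mathcal{E}_1 = \{\|\bx_1\|^2 \leq 2p\}$, which fails with probability at most $e^{-cp}$ by the same Gaussian concentration already used in \eqref{eq:concentration1}--\eqref{eq:concentration2}. On $\mathcal{E}_1$, the conditional success probability
\[
q(\bx_1) \;:=\; \Pr\bigl(\bx_1^T\bx_j > c_4\sqrt{p} \bigm| \bx_1\bigr) \;=\; \Pr\!\left(Z > \frac{c_4\sqrt{p}}{\|\bx_1\|}\right), \qquad Z \sim N(0,1),
\]
is bounded below uniformly by a value $q_0$ depending only on $c_4$ and the size $2p$ of $\mathcal{E}_1$. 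I would calibrate $c_4$ so that $q_0 > 3/4$ with a fixed $n,p$-independent margin; this is a purely numerical condition on the standard normal tail.

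Next, conditional on $\bx_1 \in \mathcal{E}_1$, the count $N = \sum_{j=2}^n I(\bx_1^T\bx_j > c_4\sqrt{p})$ is a sum of i.i.d.\ Bernoulli random variables with parameter $q(\bx_1) \geq q_0 > 3/4$. The multiplicative Chernoff bound then yields
\[
\Pr\bigl(N \leq \tfrac{3n}{4} \bigm| \bx_1\bigr) \;\leq\; e^{-c'n}
\]
for some $c' > 0$ depending on $q_0 - 3/4$. Combining this with $\Pr(\mathcal{E}_1^c) \leq e^{-cp}$ via the tower property, and using $p = yn$, yields the claimed $1 - Ce^{-cn}$ bound.

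The only delicate point is the calibration of $c_4$ in the second paragraph: it must be chosen small enough that $\Pr(Z > c_4\sqrt{p}/\|\bx_1\|) \geq q_0 > 3/4$ uniformly on $\mathcal{E}_1$, which pins down a specific constant independent of $n$ and $p$ once $\mathcal{E}_1$ is fixed. Beyond this calibration the remaining steps are standard Chernoff concentration on i.i.d.\ Bernoulli variables plus a single Gaussian-norm concentration estimate already available in the paper.
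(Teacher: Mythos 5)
Your calibration step cannot be carried out as stated: there is no $c_4>0$ for which the conditional probability $q(\bx_1)=\Pr(Z>c_4\sqrt{p}/\|\bx_1\|)$ can exceed $3/4$. For any $c_4>0$ and any nonzero $\bx_1$, the threshold $c_4\sqrt{p}/\|\bx_1\|$ is strictly positive, and a one-sided standard Gaussian tail past a positive threshold is strictly less than $1/2$. Conditioning on $\mathcal{E}_1$ does not help. So the Bernoulli parameter in your conditional Chernoff step is bounded above by $1/2$, and the bound runs in the wrong direction: you cannot conclude that the count exceeds $0.75n$.

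The root cause is a missing absolute value in the lemma as stated. The paper's own proof proves the two-sided expectation bound $E\bigl(I(|\bx_1^T\bx_2|>c_4\sqrt{p})\bigr)\geq 0.85$, and the only downstream use of the lemma, in the proof of Lemma~\ref{lemma:hessian}(b), bounds $\bA_{ij}=(\bx_i^T\bS^{-1}\bx_j)^2/(np)$ from below and therefore depends only on $|\bx_i^T\bS^{-1}\bx_j|$. Once the absolute value is restored, the conditional probability becomes
\[
q(\bx_1)=2\,\Pr\bigl(Z>c_4\sqrt{p}/\|\bx_1\|\bigr),
\]
which does exceed $3/4$ for small $c_4>0$ provided $\|\bx_1\|$ is bounded below on the conditioning event. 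This brings out a second, related slip: your $\mathcal{E}_1=\{\|\bx_1\|^2\leq 2p\}$ is in the wrong direction. Since $q(\bx_1)$ is increasing in $\|\bx_1\|$, an upper bound on $\|\bx_1\|$ gives only an upper bound on $q(\bx_1)$; you need a high-probability \emph{lower} bound such as $\|\bx_1\|^2\geq (1-\delta)p$. With the absolute value inserted and $\mathcal{E}_1$ replaced by the lower-bound event, your conditioning-plus-Chernoff argument goes through and becomes essentially the paper's proof, which decomposes $|\bx_1^T\bx_2|=|g_1|\,\|\bx_1\|$, controls the two factors separately, and then applies Hoeffding's inequality to the i.i.d.\ Bernoulli indicators.
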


Davidson and S. Szarek~\cite[Theorem II.13]{Davidson2001} showed that There exists $C_4=C_4(y)$ such that $\|\bS\|<C_4$ with probability $1-Cn\exp(-cn)$. Therefore $\bx_i^\top\bS^{-1}\bx_j\geq  \bx_i^\top\bx_j/C_4 $ and Lemma~\ref{lemma:proof31} implies that for any $1\leq i\leq n$:
\begin{equation}\label{eq:A_probability}\sum_{j=1}^n I(\bx_i^\top\bS^{-1}\bx_j>c_4\sqrt{p}/C_4) > 0.75\,\,\,\,\text{with probability $1-C\exp(-cn)$}.\end{equation}

Let $c_0=(c_4/C_4)^2/n$, then \eqref{eq:A_probability} implies %
%
\begin{equation}\label{eq:A_i_row}
\sum_{1\leq j\leq n} |\bA_{i,j}-c|\leq \sum_{1\leq j\leq n} |\bA_{i,j}|   - 0.25 c \, n \leq   \bx_i^\top\bS^{-1}\bx_i/p- 0.25 (c_4/C_4)^2,
\end{equation}
where the last step follows from \eqref{eq:A_infinity}.

Applying the estimation of $\bx_i^\top\bS^{-1}\bx_i/p$ in \eqref{eq:deri1} and a union bound argument over all $1\leq i\leq n$ to \eqref{eq:A_i_row}, \eqref{eq:proof3} is proved for $C_3=1+\eta-0.25 (c_4/C_4)^2$.

Lemma~\ref{lemma:hessian}(b) follows from~\eqref{eq:proof3} with $C_2=1/({1-C_3})$, where the expansion of $(\bI-\bA+c \mathbf{1}\mathbf{1}^\top)^{-1}$ exists since $\|\bA+c \mathbf{1}\mathbf{1}^\top\|\leq \|\bA+c \mathbf{1}\mathbf{1}^\top\|_\infty<1$. Applying $\|\bB_1\bB_2\|_\infty\leq \|\bB_1\|_\infty\|\bB_2\|_\infty$, we have
\begin{equation}
\|(\bI-\bA+c \mathbf{1}\mathbf{1}^\top)^{-1}\|_{\infty}
=\|\sum_{k=0}^\infty (c \mathbf{1}\mathbf{1}^\top-\bA)^k\|_{\infty}
\leq  \sum_{k=0}^\infty \|c \mathbf{1}\mathbf{1}^\top-\bA\|_\infty^k \leq \sum_{k=0}^\infty C_3^i=\frac{1}{1-C_3}.
\end{equation}

%
%
%

\subsubsection{Proof of Lemma~\ref{lemma:proof31}}
We first show that there exists $c_4$ such that for all $p$,
\begin{equation}\label{eq:lemma31_expect}
{\rm E}\{I(|\bx_1^\top\bx_2|>c_4\sqrt{p})\}\geq 0.85.
\end{equation}
WLOG we rotate $\bx_1$ such that it is nonzero only at the first coordinate, and $\bx_2=(g_1,g_2,...,g_p)$ where $g_i\sim \mathcal{N}(0,1)$. Then $|\bx_1^\top\bx_2|=|g_1|\,\|\bx_1\|$.

Notice that $\|\bx_1\|^2$ is the sum of $p$ independent $\chi_1^2$ distribution and $E \chi_1^2 =1$, by central limit theorem, $\|\bx_1\|\leq \sqrt{2p}$ with probability $1-Ce^{-cn}$. Besides, $\Pr(|g_1|>\sqrt{2}\, c_4)\geq 0.85$ for $c_4=\Phi^{-1}(1-0.85/2)/\sqrt{2}$. Therefore \eqref{eq:lemma31_expect} is proved by combining the estimations on $|g_1|$, $\bx_1$ and $|\bx_1^\top\bx_2|=|g_1|\,\|\bx_1\|$.

To obtain Lemma~\ref{lemma:proof31} from \eqref{eq:lemma31_expect}, we apply Hoeffding's inequality to the indicator function $I(|\bx_i^\top\bx_j|>c_4\sqrt{p})$ over all $1\leq j\leq n, j\neq i$.


\section{Summary}
We showed that Tyler's M-estimator is asymptotically equivalent to $\bS_n$ in the sense that $\|p\hat{\Sigma}-\bS_n\|\rightarrow 0$ as $p,n\rightarrow\infty$ and $p/n\rightarrow y$, where $0<y<1$ and data samples follow the distribution of $\mathcal{N}(\b0,\bI)$.  We also proved the conjecture that the spectral distribution of Tyler's M-estimator converges weakly to the Mar\v{c}enko-Pastur distribution, and extended the results to elliptical distributions.

There are several possible future directions of this work. First, it would be interesting to analyze the second order statistics of Tyler's M-estimator, considering that Couillet~\cite{Couillet2016249} has already investigated Maronna's M-estimators. Second, we would like to theoretically quantify the behavior of Tyler's M-estimator in the spiked covariance model by Couillet~\cite{Couillet2015139}, which includes the analysis of the distribution of the top eigenvalue for the null cases and the analysis of the non-null case. A recent work by Morales-Jimenez et al.~\cite{Morales2015} on the non-null case introduced a mixture model that consists of a Gaussian distribution and some deterministic or random outliers, and analyzed the performance of Maronna's M-estimator. Analyzing the performance of Tyler's M-estimator in this model would be another possible future direction.

\section*{Acknowledgements}
A. Singer was partially supported by Award Number FA9550-12-1-0317 and FA9550-13-1-0076 from AFOSR, by Award Number R01GM090200 from the NIGMS, and by Award Number LTR DTD 06-05-2012 from the Simons Foundation.

\bibliographystyle{abbrv}
\bibliography{bib-rrp}
\end{document}